\renewcommand{\baselinestretch}{1.2}
\newcommand\A{\ensuremath{\mathcal{A}}}
\newcommand\Q{\ensuremath{\mathbb{Q}}}
\newcommand\Z{\ensuremath{\mathbb{Z}}}
\newcommand\C{\ensuremath{\mathcal{C}}}
\newcommand\G{\Gamma}
\newcommand\La{\Lambda}
\newcommand\al{\alpha}
\newcommand\be{\beta}
\newcommand\ga{\gamma}
\newcommand\ep{\epsilon}
\newcommand{\la}{\lambda}
\newcommand{\So}{\Sigma_0}
\DeclareMathOperator\SL{SL}
\DeclareMathOperator\Stab{Stab}
\newcommand\I{^{-1}}
\newcommand{\ger}[1]{\mathfrak{#1}}
\newcommand{\gen}{\langle}
\newcommand{\by}{\rangle}
\theoremstyle{plain}
\newtheorem{theorem}{Theorem}[section]
\newtheorem*{theorem*}{Theorem}
\newtheorem{lemma}[theorem]{Lemma}
\newtheorem{proposition}[theorem]{Proposition}
\newtheorem{corollary}[theorem]{Corollary}
\newtheorem*{corollary*}{Corollary}
\theoremstyle{remark}
\newtheorem{remark}[theorem]{Remark}
\theoremstyle{definition}
\newtheorem{definition}[theorem]{Definition}
\numberwithin{equation}{section}
\begin{document}

\title[Transitivity of group actions on buildings from Chevalley groups]{Strongly and Weyl transitive group actions on buildings arising from Chevalley groups}

\author{Peter Abramenko}
\address{Department of Mathematics\\
University of Virginia\\
Charlottesville, VA 22904}
\email{pa8e@virginia.edu}

\author{Matthew C. B. Zaremsky}
\address{Department of Mathematics\\
University of Virginia\\
Charlottesville, VA 22904}
\email{mcz5r@virginia.edu}

\begin{abstract}
\singlespacing
Let $K$ be a field and $\ger{g}(K)$ a Chevalley group (scheme) over $K$. Let $(B,N)$ be the standard spherical $BN$-pair in $\ger{g}(K)$, with $T=B\cap N$ and Weyl group $W=N/T$. We prove that there exist non-trivial elements $w\in W$ such that all representatives of $w$ in $N$ have finite order. This allows us to exhibit examples of subgroups of $\ger{g}(\Q_p)$ that act Weyl transitively but not strongly transitively on the affine building $\Delta$ associated with $\ger{g}(\Q_p)$. Such examples were previously known only in the case when $\ger{g}(\Q_p)=\SL_2(\Q_p)$ and $\Delta$ is a tree (see \cite{abramenko07:_trans_properties}).
\end{abstract}

\maketitle

\section{Introduction}
\label{sec:introduction}
In building theory, there are two important concepts of transitivity that are stronger than chamber transitive actions, namely strongly transitive and Weyl transitive actions (for the precise definitions, we refer to Section~\ref{sec:facts} below). Strongly transitive group actions on (thick) buildings are equivalent to BN-pairs, which is reason enough for studying them. However, if one considers buildings from the W-metric point of view, there is another natural notion of transitivity, which was called ``Weyl transitivity'' in \cite{abramenko07:_trans_properties}. Strong transitivity is always defined with respect to a chosen (not necessarily complete) system of apartments of the building in question, whereas the definition of Weyl transitivity does not refer to apartments. It is well known that strong transitivity (with respect to any apartment system) always implies Weyl transitivity, and that these two notions are equivalent for buildings of spherical type. It was also expected that for non-spherical buildings Weyl transitivity is strictly weaker than strong transitivity. However, the first explicit examples of this type were documented only a few years ago in \cite{abramenko07:_trans_properties}, following some suggestions made by J. Tits in \cite{tits92:_twin_kac_moody}*{Section~3.1,~Example~(b)}. Tits suggested to analyze the actions of anisotropic groups over global fields on suitable Bruhat-Tits buildings, and in \cite{abramenko07:_trans_properties} this was done in the simplest case, namely for the norm 1 group of a quaternion division algebra $D$ over $\Q$ acting on the Bruhat-Tits tree $T_p$ of $\SL_2(\Q_p)$ for suitable $p$. Provided that -1 is not a square in $D$, this yielded the first explicit examples of Weyl transitive actions which are not strongly transitive with respect to any apartment system. However, trees are rather special buildings, and the question remained whether one can produce examples of groups acting Weyl transitively but not strongly transitively on buildings of arbitrary (affine) type.

It is the goal of this paper to present some examples of this kind. Our approach here will not use anisotropic algebraic groups but will rather generalize a second type of counter-example discussed in Section 6.10.2 of \cite{abramenko08:_build_theory_apps}. Here, dense subgroups of $\SL_2(\Q_p)$ are exhibited that do not act strongly transitively on $T_p$. By a general principle, formulated and
proved as Proposition 3.4 in \cite{abramenko07:_trans_properties} and restated as Lemma~\ref{dense-subgroups} below, the density immediately implies that these subgroups act Weyl transitively. The argument given in \cite{abramenko08:_build_theory_apps}*{Section~6.10.2} that shows that the actions are not strongly transitive appears to be rather special and is restricted to the tree case. Just based on this argument, it is not clear how to show for higher dimensions or different types that a given Weyl transitive action that is not strongly transitive with respect to a {\em certain} apartment system is in fact not strongly transitive with respect to {\em any} apartment system. In this paper we will generalize the group $\SL_2(\Q_p)$ to an arbitrary $p$-adic Chevalley group $\ger{g}(\Q_p)$. The generalization of the argument given in \cite{abramenko08:_build_theory_apps}*{Section~6.10.2} that does the job is the following fact, which turns out to be true for all $p$-adic Chevalley groups $\ger{g}(\Q_p)$:

\begin{proposition}\label{prop1}

If a subgroup of $\ger{g}(\Q_p)$ acts strongly transitively on the corresponding Bruhat-Tits building $\Delta$, then it contains nontrivial elements of finite order.

\end{proposition}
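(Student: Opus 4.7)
My strategy is to piggy-back on the spherical Weyl-group statement announced in the abstract: that there is a non-trivial $w_0 \in W$ all of whose preimages in $N$ have finite order in $\ger{g}(\Q_p)$. Granting this, Proposition~\ref{prop1} reduces to exhibiting such a preimage inside $H$.

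Let $H \leq \ger{g}(\Q_p)$ act strongly transitively on $\Delta$ with respect to some apartment system $\A$. Since the conclusion is invariant under conjugation in $\ger{g}(\Q_p)$, I will first pass to a suitable conjugate and assume that the standard apartment $\So$ lies in $\A$. For a Chevalley group over $\Q_p$, the setwise stabilizer of $\So$ in $\ger{g}(\Q_p)$ is $N$, its pointwise stabilizer is the maximal bounded subgroup $T_0 \subset T$, and the affine Weyl group $\widetilde W := N/T_0$ acts simply transitively on the chambers of $\So$. The standard reformulation of strong transitivity (chamber-transitivity together with transitivity of the apartment-stabilizer on the chambers of that apartment) then forces the canonical map
\[
\Stab_H(\So) \;=\; H \cap N \;\longrightarrow\; \widetilde W
\]
to be surjective.

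Now I invoke the announced spherical input. I would pick a non-trivial $w_0 \in W$ whose $N$-representatives are all of finite order, and lift $w_0$ arbitrarily to an element $\widetilde w_0 \in \widetilde W$ under the canonical surjection $\widetilde W = N/T_0 \onto N/T = W$. By the surjectivity just established, there is some $h \in H \cap N$ mapping to $\widetilde w_0$. Then $h$ projects to $w_0 \in W$ under $N \to W$, so $h$ is an $N$-representative of $w_0$ and therefore has finite order by the input. Moreover $h \neq 1$, because $\widetilde w_0 \neq 1$ (its image $w_0$ in $W$ is non-trivial) and $\widetilde W$ acts faithfully on $\So$, so $h$ is the desired non-trivial torsion element of $H$.

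The only real obstacle in this plan is the announced input itself — the existence of a non-trivial $w \in W$ whose $N$-representatives are all of finite order. That is a purely spherical assertion about $(N,T,W)$ in $\ger{g}(\Q_p)$, completely independent of $H$ and of $\Delta$, and the bookkeeping above consumes it in a single line. I would therefore expect the bulk of the paper's work to be devoted to establishing that input across all Chevalley types, and the proof of Proposition~\ref{prop1} itself to be essentially as sketched.
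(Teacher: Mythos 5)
Your reduction is sound, and it is essentially the one the paper itself uses: Lemma~\ref{criterion} and Corollary~\ref{torcoset} formalize exactly your observation that strong transitivity forces $\Stab_H(\Sigma)$ to surject onto the affine Weyl group $N/T_a$ for some apartment $\Sigma = g\So$, and Section~\ref{sec:strong-trans-torsion} makes the same passage from $W_a = N/T_a$ to the spherical $W = N/T$ that you make, using $T_a \leq T$ (so that a coset $nT$ consisting entirely of torsion elements contains the nontrivial coset $nT_a$, which then also consists of torsion elements). Up to the conjugation bookkeeping, which the paper handles inside Lemma~\ref{criterion}, your argument and the paper's coincide.

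The genuine gap is that the statement you take as ``input'' --- the existence of a non-trivial $w \in W$ all of whose representatives in $N$ have finite order --- is not an available black box: it is the main technical content of Proposition~\ref{prop1}, it is what Section~\ref{sec:strong-trans-torsion} is devoted to proving, and your proposal contains no argument for it. The paper's Theorem~\ref{chev_tor_criterion} shows that $w$ has this property precisely when $w$, viewed as an orthogonal transformation of the span $E$ of $\Phi$, has no eigenvalue $1$ (equivalently $1 + w + \cdots + w^{m-1} = 0$ with $m = |w|$). The forward implication is proved by choosing the representative $n_0 \in N_0 = \langle m_{\al}(1) \mid \al \in \Phi\rangle$, noting that $n_0^m$ lies in the elementary abelian $2$-group $T_0 = \langle h_{\al}(-1)\rangle$ so that $n_0$ itself has order $m$ or $2m$, and then showing $(n_0 h)^m = n_0^m$ for \emph{every} $h \in T$: one conjugates the factors $h_{\be_j}(\la_j)$ of $h$ past $n_0$ via $m_{\al}(1) h_{\be}(\la) m_{\al}(1)\I = h_{s_{\al}(\be)}(\la)$, and the resulting product $\prod_{i=1}^m h_{w^i(\be_j)}(\la_j)$ is trivial because $\sum_{i=1}^m \langle \gamma, w^i(\be_j)\rangle = 0$ for every weight $\gamma$ and because $h_{\al}(\la)$ acts on the weight space $V_{\gamma}$ by $\la^{\langle\gamma,\al\rangle}$. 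Finally, Coxeter elements of $W$ never have eigenvalue $1$, so such $w$ exist in every type. Without some version of this argument, your proof is a correct one-line reduction of the proposition to its hardest part.
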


An explanation of how $\Delta$ arises from $\ger{g}(\Q_p)$ will be given in Section~\ref{sec:chev_gps}, and a proof of this proposition will be given in Section~\ref{sec:strong-trans-torsion}. Then, using arguments in Section~\ref{sec:weyl-trans-torfree}, one obtains the following:

\begin{proposition}\label{prop2}

$\ger{g}(\Q_p)$ has (many) dense torsionfree subgroups. The action of any such subgroup on $\Delta$ is Weyl transitive but not strongly transitive with respect to any apartment system of $\Delta$.

\end{proposition}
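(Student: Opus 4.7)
The plan is to combine the two tools already developed: Proposition~\ref{prop1}, which converts strong transitivity into torsion, and Lemma~\ref{dense-subgroups}, which converts density into Weyl transitivity. Once dense torsionfree subgroups are in hand, the second sentence of the proposition follows almost immediately, so the real content lies in the first sentence.

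To exhibit one such subgroup, I would start with the arithmetic group $\Gamma := \ger{g}(\Z[1/p])$ sitting inside $\ger{g}(\Q_p)$. By strong approximation for the simply connected cover (and, if necessary, transfer along the isogeny to $\ger{g}$), $\Gamma$ is dense in $\ger{g}(\Q_p)$. Because $\Gamma$ is a finitely generated linear group over a field of characteristic zero, Selberg's lemma provides a torsionfree normal subgroup $\Gamma_0 \trianglelefteq \Gamma$ of finite index. The crucial point is that $\Gamma_0$ remains dense: if $H \leq \ger{g}(\Q_p)$ is dense and $H_0 \leq H$ has finite index, then the closure $\overline{H_0}$ is closed of finite index in $\ger{g}(\Q_p)$, hence open. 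But $\ger{g}(\Q_p)$ is generated by its root subgroups $U_\al \cong (\Q_p,+)$, each of which is a divisible abelian group, so $\ger{g}(\Q_p)$ admits no proper subgroup of finite index. Therefore $\overline{H_0} = \ger{g}(\Q_p)$, and in particular $\Gamma_0$ is dense and torsionfree. ``Many'' such subgroups then follow by iterating into deeper finite-index subgroups of $\Gamma_0$, by conjugating inside $\ger{g}(\Q_p)$, or by varying the initial arithmetic subgroup.

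Now fix any dense torsionfree subgroup $H \leq \ger{g}(\Q_p)$. Weyl transitivity of $H$ on $\Delta$ is immediate from Lemma~\ref{dense-subgroups}. For the failure of strong transitivity, suppose toward a contradiction that $H$ acts strongly transitively on $\Delta$ with respect to \emph{some} apartment system. Proposition~\ref{prop1} then supplies a nontrivial element of finite order in $H$, contradicting the torsionfreeness of $H$. Hence $H$ is strongly transitive with respect to \emph{no} apartment system.

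The main obstacle I anticipate is precisely the no-finite-index-subgroup property used in the density step. For simply connected simple Chevalley groups this is classical, but for a general Chevalley scheme one must handle finite central isogenies carefully; the cleanest route is probably to work throughout with the simply connected cover and transfer back along the isogeny, or to cite directly the standard fact that $\ger{g}(\Q_p)$ is generated by its (divisible) root groups. Apart from that, every step is a short application of material already assembled in the paper.
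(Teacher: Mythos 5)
Your overall architecture matches the paper's: density gives Weyl transitivity via Lemma~\ref{dense-subgroups}, and torsionfreeness rules out strong transitivity via Proposition~\ref{prop1}. Where you genuinely diverge is in producing the dense torsionfree subgroups. The paper works with explicit congruence subgroups $\G_q = \{A \in \ger{g}(\Z[\frac{1}{p}]) : A \equiv I_d \bmod q\}$ for $q>2$ prime to $p$, proves density by the elementary observation that $q\Z[\frac{1}{p}]$ is dense in $\Q_p$ (so the $x_{\al}(\la)$ with $\la \in q\Z[\frac{1}{p}]$ are dense in each root group, and these generate), and proves torsionfreeness by a direct binomial-expansion argument on $A = I_d + B$. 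You instead invoke strong approximation for density of $\ger{g}(\Z[\frac{1}{p}])$ and Selberg's lemma for a torsionfree finite-index subgroup, then recover density of that subgroup by showing $\ger{g}(\Q_p)$ has no proper finite-index subgroup (being generated by the divisible groups $U_{\al}$; to make this airtight you should pass to the normal core of the putative finite-index subgroup). Your route is correct but imports two substantial theorems where the paper needs none: note that the standard proof of Selberg's lemma is itself the congruence-subgroup argument the paper carries out explicitly, and density of $\ger{g}(\Z[\frac{1}{p}])$ follows from the same elementary root-group observation without strong approximation (which would in any case require care about simple connectedness of the Chevalley scheme, as you acknowledge). What the paper's approach buys is self-containedness and the explicit family $\{\G_q\}$ witnessing ``many''; what yours buys is brevity and a cleaner conceptual explanation of why finite-index subgroups of dense subgroups stay dense.

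One step you pass over too quickly: Lemma~\ref{dense-subgroups} has the hypothesis that the stabilizer of some chamber is \emph{open} in $G$, and this is not automatic from the formal setup. The paper spends a short argument on it, embedding $G$ into $\SL_d(\Q_p)$ and showing that the Iwahori subgroup $B_a$ equals $G \cap \widetilde{B}$ for the standard (open) Iwahori subgroup $\widetilde{B}$ of $\SL_d(\Q_p)$, using the affine Bruhat decomposition. Your proof should either verify this openness or explicitly cite it; as written, ``immediate from Lemma~\ref{dense-subgroups}'' skips a hypothesis that requires a genuine (if short) argument.
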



\section{Some facts about strongly and Weyl transitive actions}
\label{sec:facts}

Let $\Delta$ be a thick building with associated Coxeter system $(W,S)$ of finite rank $|S|$. Denote by $\C$ the set of chambers of $\Delta$ and by $\delta :  \C \times \C \longrightarrow W$ the associated Weyl distance function. Recall that a group $H$ is said to act \emph{Weyl transitively} on $\Delta$ if it acts on $\C$ preserving $\delta$ and such that for any given $w \in W$, the action of $H$ on $\{ (C,C') \in \C \times \C \mid \delta(C,C') = w \}$ is transitive. In particular, Weyl transitive actions are type-preserving and chamber transitive. It can be shown that a group $H$ acts Weyl transitively on some thick building if and only if $H$ admits a ``Tits subgroup;'' see \cite{abramenko08:_build_theory_apps}*{Proposition~6.34~and~Definitions~6.39~and~6.45}.

\

Let $\overline{\A}$ be the complete system of apartments of $\Delta$. By definition, a subset $\A \subseteq \overline{\A}$ is a system of apartments of $\Delta$ if for any two $C,C' \in \Delta$, there is a $\Sigma \in \A$ which contains $C$ and $C'$. A type-preserving action of a group $H$ on $\Delta$ is called \emph{strongly transitive with respect to $\A$} if it is transitive on the set $\{ (C, \Sigma) \in \C \times \A \mid C \in \Sigma \}$. The action of $H$ on $\Delta$ is called \emph{strongly transitive} if it is strongly transitive with respect to \emph{some} system of apartments $\A$ of $\Delta$. It is clear that $H$ acts strongly transitively on $\Delta$ with respect to a system of apartments $\A$ if it acts chamber transitively, and for some (and hence any) chamber $C$ the stabilizer $\Stab_{H}(C)$ acts transitively on $\{ \Sigma \in \A \mid C \in \Sigma \}$. Equivalently, $H$ acts transitively on $\A$, and for some (and hence any) $\Sigma \in \A$ the stabilizer $\Stab_{H}(\Sigma)$ acts transitively on the set of chambers of $\Sigma$. It is well known that a group $H$ acts strongly transitively on a thick building if and only if $H$ admits a BN-pair. If $\Delta$ is spherical, it is also well known that $\overline{\A}$ is the only system of apartments of $\Delta$, and that the following three statements for the action of a group $H$ on $\Delta$ are equivalent: (i) $H$ acts Weyl transitively; (ii) $H$ acts transitively on pairs of opposite chambers of $\Delta$; (iii) $H$ acts strongly transitively. In the following lemma we collect some further (easy) results which relate strong and Weyl transitivity. The proofs can be found in \cite{abramenko07:_trans_properties}*{Section~3} or in \cite{abramenko08:_build_theory_apps}*{Section~6.1.3}.

\

\begin{lemma}\label{Weyl}
Let $H$ be a group acting on the building $\Delta$.
\begin{enumerate}[\rm (1)]
\item
If $H$ acts strongly transitively, then it also acts Weyl transitively.

\item
If $H$ acts Weyl transitively, then $H\Sigma = \{ h\Sigma \mid h \in H \}$ is a system of apartments of $\Delta$ for any $\Sigma \in \overline{\A}$.

\item
$H$ acts strongly transitively if and only it acts Weyl transitively and there exists an apartment $\Sigma \in \overline{\A}$ such that $\Stab_{H}(\Sigma)$
acts chamber transitively on $\Sigma$.
\end{enumerate}

\end{lemma}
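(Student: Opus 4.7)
For part (1), I plan to fix an apartment system $\A$ with respect to which $H$ is strongly transitive, and take two pairs $(C,C'), (D,D')\in\C\times\C$ with $\delta(C,C')=\delta(D,D')=w$. Since $\A$ is a system of apartments, I can choose $\Si_1,\Si_2\in\A$ with $C,C'\in\Si_1$ and $D,D'\in\Si_2$. Strong transitivity gives an $h\in H$ sending the pair $(C,\Si_1)$ to $(D,\Si_2)$; in particular $hC=D$ and $hC'$ lies in $\Si_2$ at Weyl distance $w$ from $D$. Inside a Coxeter complex a chamber at prescribed Weyl distance from a given chamber is unique, so $hC'=D'$, finishing the argument.

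For part (2), I plan to check directly the defining property of an apartment system, reducing to pairs of chambers by the standard trick of enlarging simplices to chambers (apartments are closed under taking faces). Given chambers $C,C'$, set $w=\delta(C,C')$. Since $\Si$ is an apartment and hence a Coxeter complex, it contains chambers $C_0,C_0'$ with $\delta(C_0,C_0')=w$. Weyl transitivity supplies $h\in H$ with $(hC_0,hC_0')=(C,C')$, so both chambers lie in $h\Si$. As $H$ acts by building automorphisms, each $h\Si$ is an element of $\Abar$, so $H\Si$ is indeed an apartment system.

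For part (3), the forward direction is immediate: if $H$ acts strongly transitively with respect to some $\A$, then (1) gives Weyl transitivity and the definition directly yields an apartment $\Si\in\A\subseteq\Abar$ whose $H$-stabilizer is chamber transitive on $\Si$. Conversely, assuming Weyl transitivity and the stabilizer condition for some $\Si\in\Abar$, part (2) makes $\A:=H\Si$ an apartment system, Weyl transitivity makes $H$ transitive on $\A$, and the hypothesis makes $\Stab_H(\Si)$ chamber transitive on $\Si$; by the equivalent characterization of strong transitivity stated right before the lemma, this is exactly strong transitivity with respect to $\A$.

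None of these steps looks hard; the only place demanding care is (1), where one must notice that mapping a pair of apartments is not enough by itself and that Weyl transitivity then follows from the uniqueness of a chamber at a given Weyl distance inside a single apartment. This uniqueness-in-a-Coxeter-complex fact is the real content, and everything else is bookkeeping with the definitions.
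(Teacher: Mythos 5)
Your proof is correct, and it is essentially the standard argument: the paper itself gives no proof of this lemma but defers to \cite{abramenko07:_trans_properties}*{Section~3} and \cite{abramenko08:_build_theory_apps}*{Section~6.1.3}, where the same steps appear — thinness of apartments forcing $hC'=D'$ in (1), realizing each $w$ inside a fixed apartment and transporting by Weyl transitivity in (2), and combining (1), (2) and the transitive-on-$\A$ plus chamber-transitive-stabilizer characterization in (3). No gaps; the one point you rightly flag, uniqueness of the chamber at prescribed Weyl distance inside an apartment, is exactly the crux.
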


In view of (3) we make the following definition:

\begin{definition}
A type-preserving action of a group $H$ on a building $\Delta$ is called \emph{weakly transitive} if there exists an apartment $\Sigma \in \overline{\A}$ such that $\Stab_{H}(\Sigma)$ acts chamber transitively on $\Sigma$.
\end{definition}

It is clear that an action is strongly transitive if and only if it is both Weyl and weakly transitive. In order to find Weyl transitive actions that are not strongly transitive, one needs a group-theoretic criterion equivalent to weak transitivity. We will formulate such a criterion in the following situation, which we shall assume for the rest of this section:

\

Let $G$ be a group acting strongly transitively on $\Delta$ with respect to $\overline{\A}$. Fix an apartment $\Sigma_0$ in $\overline{\A}$, and set $N = \Stab_{G}(\Sigma_0)$,
$T = \{ t \in N \mid tC = C$ for all chambers $C$ in $\Sigma_0 \}$. Note that $N/T$ can be identified with the group of type-preserving automorphisms of $\Sigma_0$, and hence with $W$.

\

\begin{lemma}\label{criterion}
For a subgroup $H$ of $G$, the following are equivalent:
\begin{enumerate}[\rm (i)]
\item
H acts weakly transitively on $\Delta$.
\item
There exists an element $g \in G$ such that $g(nT)g^{-1} \cap H \neq \emptyset$ for all $n \in N$.
\end{enumerate}
\end{lemma}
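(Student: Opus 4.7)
The plan is to translate condition (i) directly, using the parameterizations of apartments and chambers afforded by the strong transitivity of $G$. Since $G$ acts strongly transitively with respect to $\overline{\A}$, it is in particular transitive on $\overline{\A}$, so every apartment $\Si \in \overline{\A}$ has the form $g\So$ for some $g \in G$, and then $\Stab_{G}(g\So) = gNg\I$, whence $\Stab_{H}(g\So) = H \cap gNg\I$.

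Next I would fix a chamber $C_0$ of $\So$ and use the fact that $N/T \cong W$ acts simply transitively on the chambers of $\So$ to identify each chamber of $\So$ with a coset $nT$ via $nT \leftrightarrow nC_0$. Transporting by $g$, the chambers of $\Si = g\So$ correspond to cosets in $N/T$ via $nT \leftrightarrow gnC_0$. With these labels, asking that $\Stab_{H}(\Si) = H \cap gNg\I$ act chamber transitively on $\Si$ becomes: for every $n \in N$ there exists $h = gmg\I \in H \cap gNg\I$ with $h(gC_0) = gnC_0$, i.e.\ $m \in nT$, i.e.\ $h \in g(nT)g\I$. This is precisely condition (ii), with $g$ being the element for which $\Si = g\So$.

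The argument is essentially an unpacking of definitions via the standard $G/N$ and $N/T$ dictionaries, so I do not expect a genuinely hard step. The main care needed is to set up the chamber labeling $N/T \leftrightarrow $ chambers of $\So$ in an $N$-equivariant manner, so that conjugation by $g$ matches up cleanly on the chambers of $\Si$, and to observe that chamber transitivity of $\Stab_{H}(\Si)$ reduces to the single-orbit condition from the base chamber $gC_0$ since we are acting by a group.
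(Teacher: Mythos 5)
Your proposal is correct and follows essentially the same route as the paper: both reduce weak transitivity to the statement that $\Stab_H(g\So) = H \cap gNg\I$ surjects onto $gNg\I/gTg\I$, using that $N/T$ acts simply transitively on the chambers of $\So$. The paper phrases this as the subgroup identity $(gNg\I \cap H)(gTg\I) = gNg\I$ rather than via an explicit base chamber $C_0$, but the content is identical.
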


\begin{proof}
Let $\Sigma$ be any element of $\overline{\A}$. By assumption, there exists $g \in G$ such that $\Sigma  = g\Sigma_0$. Hence $\Stab_{G}(\Sigma) = gNg^{-1}$, and the pointwise fixer of $\Sigma$ in $G$ is $gTg^{-1}$. So $\Stab_{H}(\Sigma) = gNg^{-1} \cap H$, and this group acts transitively on the set $\C(\Sigma)$ of chambers of $\Sigma$ if and only if $\Stab_{H}(\Sigma)(gTg^{-1}) = gNg^{-1}$. (Here we use that $gNg^{-1}/gTg^{-1}$, which can be identified with the group of type-preserving automorphisms of $\Sigma$, acts simply transitively on $\C(\Sigma)$.) But $(gNg^{-1} \cap H)(gTg^{-1}) = gNg^{-1}$ if and only if each coset $(gng^{-1})(gTg^{-1})$ (with $n \in N$) in $gNg^{-1}/gTg^{-1}$ has a representative in $H$, i.e. if and only if $gnTg^{-1} \cap H \neq \emptyset$ for all $n \in N$.
\end{proof}

\begin{corollary}\label{torcoset}
If there exists an element $n_0 \in N \setminus T$ such that all elements of $n_0T$ have finite order in $G$, then no torsionfree subgroup of $G$ acts weakly transitively on $\Delta$. \qed
\end{corollary}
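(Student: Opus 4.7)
The plan is to argue by contradiction, feeding the hypothesis directly into the criterion of Lemma~\ref{criterion}. Suppose $H \leq G$ is torsionfree and acts weakly transitively on $\Delta$. By the lemma, there exists $g \in G$ with
\[
g(nT)g^{-1} \cap H \neq \emptyset \quad \text{for every } n \in N.
\]
Specializing to $n = n_0$ produces an element $h \in H$ of the form $h = g n_0' g^{-1}$ for some $n_0' \in n_0 T$.

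Two quick observations finish the argument. First, by the standing hypothesis every element of $n_0 T$ has finite order in $G$, so $n_0'$ has finite order, and hence so does its conjugate $h$. Second, $h \neq 1$: otherwise $n_0' = 1 \in T$ would force $n_0 T = T$, i.e.\ $n_0 \in T$, contradicting the assumption $n_0 \in N \setminus T$. Thus $h$ is a nontrivial torsion element of $H$, contradicting torsionfreeness.

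There is really no obstacle here --- the content of the statement is entirely absorbed by Lemma~\ref{criterion}, which is why the author marks the corollary with \qed directly after the statement. The only subtlety worth flagging is the need to check $h \neq 1$, which is precisely the role of the assumption $n_0 \in N \setminus T$ (as opposed to merely $n_0 \in N$); without it the whole coset $n_0 T$ could equal $T$ and the element $h$ produced by the criterion could be trivial.
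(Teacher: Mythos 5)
Your argument is correct and is exactly the intended one: the paper leaves the corollary as an immediate consequence of Lemma~\ref{criterion} (hence the \qed after the statement), and your specialization to $n = n_0$, together with the observation that the resulting conjugate is a nontrivial torsion element of $H$, is precisely that deduction. The point you flag about $n_0 \in N \setminus T$ guaranteeing $1 \notin n_0T$ is the right (and only) detail to check.
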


In the following sections we shall apply this set-up to the Chevalley group $G = \ger{g}(\Q_p)$ and the associated affine building $\Delta$, in which case an $n_0$ as in Corollary~\ref{torcoset} can be found. On the other hand, we shall exhibit torsionfree subgroups of $\ger{g}(\Q_p)$ which still act Weyl transitively on $\Delta$. In order to establish the latter, we apply Proposition 3.4 of \cite{abramenko07:_trans_properties}. For the convenience of the reader, we restate this result below.

\begin{lemma}\label{dense-subgroups}
Suppose that $G$ is a topological group and that the stabilizer $B$ of some chamber $C$ of $\Delta$ is an open subgroup of $G$. Then any dense subgroup $H$ of $G$ acts Weyl transitively on $\Delta$.
\end{lemma}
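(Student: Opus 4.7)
The plan is to verify Weyl transitivity of $H$ directly from the definition, combining two ingredients: that $G$ itself acts Weyl transitively on $\Delta$ (this is implicit in the setup, and in the paper's intended applications it follows from the stronger assumption that $G$ acts strongly transitively, cf.\ Lemma~\ref{Weyl}(1)), and that density of $H$ in $G$ will let us approximate any $g \in G$ by an element of $H$ that acts in the same way on a prescribed finite set of chambers.

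First I would fix $w \in W$ and a reference pair $(C, C_w)$ with $\delta(C, C_w) = w$, where $C$ is the chamber appearing in the hypothesis. Given any other pair $(C_1, C_2)$ with $\delta(C_1, C_2) = w$, Weyl transitivity of $G$ provides some $g \in G$ with $gC = C_1$ and $gC_w = C_2$. It then suffices to produce $h \in H$ which agrees with $g$ on the pair $(C, C_w)$, i.e., such that $g^{-1}h \in \Stab_G(C) \cap \Stab_G(C_w)$.

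Next I would verify that $\Stab_G(C) \cap \Stab_G(C_w)$ is an open subgroup of $G$. The first factor equals $B$ and is open by hypothesis. Since $G$ acts chamber-transitively (a consequence of Weyl transitivity), one can pick $g_1 \in G$ with $g_1 C = C_w$, so that $\Stab_G(C_w) = g_1 B g_1^{-1}$; this is open because conjugation by $g_1$ is a self-homeomorphism of $G$. Hence $U := B \cap g_1 B g_1^{-1}$ is open, and so is the coset $gU$, which is a non-empty open neighborhood of $g$ in $G$. Density of $H$ then furnishes an element $h \in H \cap gU$, and this $h$ satisfies $h(C, C_w) = (C_1, C_2)$, as required.

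I do not anticipate a real obstacle here: the argument is essentially an \emph{open plus dense equals non-empty} observation, the only non-trivial content being that every chamber stabilizer is conjugate to $B$ and therefore open. The one point to keep in mind is the implicit standing assumption that $G$ itself already acts Weyl transitively (or at least chamber-transitively with the correct orbit structure on pairs at each Weyl distance); without it, the conclusion cannot be meaningful.
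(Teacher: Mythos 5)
Your argument is correct, and it is essentially the standard proof: the paper itself gives no proof of this lemma but simply cites Proposition~3.4 of \cite{abramenko07:_trans_properties}, whose argument is the same ``dense subgroup meets the open coset $g\bigl(\Stab_G(C)\cap\Stab_G(C_w)\bigr)$'' observation you describe. You are also right to flag the standing hypothesis of Section~\ref{sec:facts} that $G$ acts strongly (hence Weyl) transitively on $\Delta$, which is what makes the reference element $g$ available.
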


\section{Chevalley Groups and VRGD systems}
\label{sec:chev_gps}

Since the current group of interest is an arbitrary Chevalley group, in this section some background information about Chevalley groups is collected. Chevalley groups will also be established as examples of \emph{RGD systems}, and $p$-adic Chevalley groups as examples of \emph{VRGD systems}. For a more comprehensive review of Chevalley groups, one may consult \cite{steinberg67:_chev_gps}, and for a quick overview (the notation of which we will use here) see \cite{abramenko08:_build_theory_apps}*{Section~7.9.2}. For an overview of RGD (root group data) systems one may consult \cite{abramenko08:_build_theory_apps}*{Chapter~7}, and a reference for what we will call VRGD (valuated root group data) systems can be found in \cite{weiss09:_struct_affine_build}*{Chapter~3}.

Given a complex semisimple Lie algebra $\ger{g}$ with root system $\Phi$, there is a family of groups $\ger{g}(K)=\ger{g}(\Phi,\La,K)$ parameterized by $K$ and $\La$. Here, $K$ is an arbitrary field and $\La\supseteq\Phi$ is a weight lattice arising from some faithful finite-dimensional representation $V$ of $\ger{g}$. Let $V=\bigoplus_{\ga}V_{\ga}$ be the weight space decomposition of $V$, and let $E$ be the Euclidean space spanned by the roots, with Euclidean inner product $\{,\}$. Also let $s_{\al}$ denote the reflection in $E$ about the hyperplane orthogonal to $\al$, and define $\gen v,v'\by:=2\{v,v'\}/\{v',v'\}$. Note that $\gen v,v\by=2$ for all $v\in E$. The group $\ger{g}(K)$ has generators $x_{\al}(\la)$ for $\al\in\Phi$, $\la\in K$, and a series of relations that we will reference as they become necessary. For notational convenience, we define some important elements of $\ger{g}(K)$. For $\al\in\Phi$, $\la\in K^*$, let
$$m_{\al}(\la):=x_{\al}(\la)x_{-\al}(-\la\I)x_{\al}(\la)\textnormal{ and }h_{\al}(\la):=m_{\al}(\la)m_{\al}(1)\I.$$

Define for each root $\al$ a subgroup $U_{\al}:=\{x_{\al}(\la)|\la\in K\}$. As proved in \cite{steinberg67:_chev_gps}*{Corollary~1~to~Lemma~18}, the map $(K,+)\rightarrow U_{\al}$ given by $\la\mapsto x_{\al}(\la)$ is an isomorphism. Also, for fixed $\al$ one has by \cite{steinberg67:_chev_gps}*{Lemma~28(a)} that $h_{\al}(\la)h_{\al}(\mu)=h_{\al}(\la\mu)$. In this way, the algebraic structure of $K$ is reflected in the structure of the Chevalley group. Now we will show that if $K$ has a discrete valuation, the additional structure given by the valuation can also be encoded into the Chevalley group.

First we will see that $(\ger{g}(K),(U_{\al})_{\al\in\Phi})$ is an RGD system. A pair $(G,(U_{\al})_{\al\in\Phi})$, where $(U_{\al})_{\al\in\Phi}$ is a family of subgroups of $G$, is called an RGD system provided the following axioms hold.

\noindent \textbf{(RGD0):} For each $\al\in\Phi$, $U_{\al}\neq\{1\}$.

\noindent \textbf{(RGD1):} For all $\al,\be\in\Phi$ with $\al\neq\pm\be$, $\displaystyle[U_{\al},U_{\be}]\subseteq\prod_{\ga\in(\al,\be)}U_{\ga}$, where $(\al,\be)$ is defined as in \cite{abramenko08:_build_theory_apps}*{Section~7.7.2}

\noindent \textbf{(RGD2):} For each $\al\in\Phi$ there is a function $m:U_{\al}^*\rightarrow G$ such that for $\al\in\Phi$ and $u\in U_{\al}^*$, $m(u)\in U_{-\al}uU_{-\al}$ and $m(u)U_{\be}m(u)\I=U_{s_{\al}(\be)}$.

\noindent \textbf{(RGD3):} For each fundamental root $\al$, $U_{-\al}\not\leq U_+$, where $U_+=\gen U_{\al}|\al\in\Phi^+\by$.

\noindent \textbf{(RGD4):} $G=T\gen U_{\al}|\al\in\Phi\by$, where $\displaystyle T=\bigcap_{\al\in\Phi}N_G(U_{\al})$.

\noindent A proof that $(\ger{g}(K),(U_{\al})_{\al\in\Phi})$ satisfies the axioms is given in \cite{abramenko08:_build_theory_apps}*{Section~7.9.2}, though here the axioms are in a slightly different (equivalent) form.

We now assume that $K$ has a (surjective) discrete valuation $\nu:K\twoheadrightarrow\Z\cup\{\infty\}$. In later sections we will use $K=\Q_p$ and $\nu=\nu_p$. Let (V1) denote the property $\nu(\la\mu)=\nu(\la)+\nu(\mu)$, let (V2) denote the property $\nu(\la+\mu)\geq\max(\nu(\la),\nu(\mu))$, let (V3) denote the property $\nu(c)\geq0$ for $c\in\Z$, and let (V4) denote the property $\nu(-\la)=\nu(\la)$.

The family of maps $(\phi_{\al})_{\al\in\Phi}$ with $\phi_{\al}:U_{\al}^*\rightarrow\Z$ is called a \emph{root group valuation} provided that the following axioms hold:

\textbf{(VRGD0):} Each $\phi_{\al}$ is surjective.

\textbf{(VRGD1):} For each $\al\in\Phi$ and each $k\in\Z$, $U_{\al,k}:=\gen u\in U_{\al}|\phi_{\al}(u)\geq k\by$ is a subgroup of $U_{\al}$, where $\phi_{\al}(1)$ is considered to be $\infty$.

\textbf{(VRGD2):} For all $\al,\be\in\Phi$ with $\al\neq\pm\be$, $\displaystyle[U_{\al,k},U_{\be,\ell}]\subseteq\prod_{\ga\in(\al,\be)}U_{\ga,p_{\ga}k+q_{\ga}\ell}$, where $p_{\ga}$ and $q_{\ga}$ are as defined in \cite{weiss09:_struct_affine_build}*{Chapter~3}.

\textbf{(VRGD3):} For $\al,\be\in\Phi$, $u\in U_{\al}^*$, $x\in U_{\be}^*$, one has that $\phi_{s_{\al}(\be)}(m(u)\I xm(u))-\phi_{\be}(x)$ is independent of $x$, where $m:U_{\al}^*\rightarrow \ger{g}(K)$ is as defined in (RGD2).

\textbf{(VRGD4):} For $\al\in\Phi$,  $u\in U_{\al}^*$, $x\in U_{\al}^*$, we have that $\phi_{-\al}(m(u)\I xm(u))-\phi_{\al}(x)=-2\phi_{\al}(u)$, independent of $x$.

\begin{definition}\label{VRGD_def}
Let $G$ be a group with a family of subgroups $(U_{\al})_{\al\in\Phi}$. Let $(\phi_{\al})_{\al\in\Phi}$ be a family of maps $\phi_{\al}:U_{\al}\rightarrow\Z$. If $(G,(U_{\al})_{\al\in\Phi})$ is an RGD system and $(\phi_{\al})_{\al\in\Phi}$ is a root group valuation then $(G,(U_{\al})_{\al},(\phi_{\al})_{\al})$ is called a \emph{VRGD system}.
\end{definition}

Now, in the particular case of $G=\ger{g}(K)$ where $K$ has (surjective) discrete valuation $\nu$, define for each $\al\in\Phi$ a map $\phi_{\al}:U_{\al}\rightarrow\Z$ by $x_{\al}(\la)\mapsto \nu(\la)$. Since $U_{\al}\cong(K,+)$ via $x_{\al}(\la)\mapsto\la$, $\phi_{\al}$ is clearly well-defined. It is easy to check that $(G,(U_{\al})_{\al\in\Phi},(\phi_{\al})_{\al\in\Phi})$ is a VRGD system.

\begin{proposition}\label{chev_is_VRGD}
$(G,(U_{\al})_{\al\in\Phi},(\phi_{\al})_{\al\in\Phi})$ is a VRGD system.
\end{proposition}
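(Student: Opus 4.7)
The plan is to verify axioms (VRGD0) through (VRGD4), since the RGD structure of $(\ger{g}(K),(U_\al)_{\al\in\Phi})$ is already recorded in \cite{abramenko08:_build_theory_apps}*{Section~7.9.2}. Throughout, the essential tool is the fact that $\la\mapsto x_\al(\la)$ gives an isomorphism $(K,+)\cong U_\al$, so $\phi_\al$ is essentially $\nu$ read off through this identification; the valuation axioms (V1)--(V4) then transfer to statements about $\phi_\al$.

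Axioms (VRGD0) and (VRGD1) are almost immediate. Surjectivity of $\phi_\al$ follows from surjectivity of $\nu$ together with the isomorphism above. For (VRGD1), the set $U_{\al,k}$ corresponds under this isomorphism to $\{\la\in K:\nu(\la)\geq k\}$, which is closed under addition by the ultrametric inequality (V2) and under negation by (V4), hence a subgroup.

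For (VRGD2), I would invoke the Chevalley commutator formula
$$[x_\al(\la),x_\be(\mu)] = \prod_{\substack{i,j>0 \\ i\al+j\be\in\Phi}} x_{i\al+j\be}\bigl(C^{\al,\be}_{ij}\,\la^i\mu^j\bigr),$$
where the structure constants $C^{\al,\be}_{ij}$ are integers. Using (V1) and (V3), if $\phi_\al(x_\al(\la))\geq k$ and $\phi_\be(x_\be(\mu))\geq \ell$, then $\nu(C^{\al,\be}_{ij}\la^i\mu^j)\geq ik+j\ell$, so each factor lies in $U_{i\al+j\be,\,ik+j\ell}$. This matches the $p_\ga,q_\ga$ convention of \cite{weiss09:_struct_affine_build}*{Chapter~3}, in which $(p_\ga,q_\ga)=(i,j)$ when $\ga=i\al+j\be$. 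For (VRGD3) and (VRGD4), the core computation is the conjugation identity
$$m_\al(\la)\,x_\be(\mu)\,m_\al(\la)\I \;=\; x_{s_\al(\be)}\bigl(\ep_{\al,\be}\,\la^{-\gen\be,\al\by}\,\mu\bigr), \qquad \ep_{\al,\be}\in\{\pm1\},$$
which follows from the decomposition $m_\al(\la)=h_\al(\la)m_\al(1)$, the standard torus action $h_\al(\la)x_\be(\mu)h_\al(\la)\I=x_\be(\la^{\gen\be,\al\by}\mu)$ \cite{steinberg67:_chev_gps}*{Lemma~20}, and the fact that $m_\al(1)$-conjugation sends $U_\be$ to $U_{s_\al(\be)}$ by a scalar on $K$. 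Applying $\phi_{s_\al(\be)}$ to both sides and using (V1), (V4) gives $\phi_{s_\al(\be)}(m(u)\I xm(u))-\phi_\be(x) = -\gen\be,\al\by\,\phi_\al(u)$, which depends only on $u$ (establishing (VRGD3)); specializing to $\be=\al$ with $\gen\al,\al\by=2$ yields (VRGD4).

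The main obstacle, modest as it is, is sign and exponent bookkeeping: one must verify that the pairing $\gen\cdot,\cdot\by$ and the labels $p_\ga,q_\ga$ in \cite{weiss09:_struct_affine_build} are compatible with the corresponding Chevalley/Steinberg exponents used above. Apart from this compatibility check, each axiom reduces by direct substitution to one of the properties (V1)--(V4) of the discrete valuation $\nu$.
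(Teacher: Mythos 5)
Your proposal is correct and follows essentially the same route as the paper: (VRGD0)--(VRGD2) are reduced to the valuation axioms via the isomorphism $(K,+)\cong U_\al$ and the Chevalley commutator formula, and (VRGD3)--(VRGD4) come from the conjugation identity for $m_\al(\la)$ (your exponent $-\gen\be,\al\by$ agrees with the paper's $\gen s_\al(\be),\al\by$ since $s_\al(\be)=\be-\gen\be,\al\by\al$). The only cosmetic difference is that you derive the conjugation relation from $m_\al(\la)=h_\al(\la)m_\al(1)$ while the paper cites it directly as Equation 7.36 of Abramenko--Brown.
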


\begin{proof}
(VRGD0) holds trivially. (VRGD1) follows from (V2), (V4), and the canonical isomorphism $(K,+)\rightarrow U_{\al}$. (VRGD2) follows from the Chevalley relation (R2) found after Lemma 20 of \cite{steinberg67:_chev_gps}, and from (V1) and (V3). 

Next we check (VRGD3). By \cite{abramenko08:_build_theory_apps}*{Equation~7.36}, we have that
$$m_{\al}(\la) x_{\be}(\mu)m_{\al}(\la)\I=x_{s_{\al}(\be)}(\pm\la^{\gen s_{\al}(\be),\al\by}\mu)$$
for any $\al,\be\in\Phi$, $\la\in K^*$, $\mu\in K$. Denote this relation by (R). Let $u=x_{\al}(\la)\in U_{\al}$ and set $m(u)=m_{-\al}(-\la\I)$, as in the proof that $(G,(U_{\al})_{\al\in\Phi})$ is an RGD system. Also let $x=x_{\be}(\mu)$. We claim that the integer given by $\phi_{s_{\al}(\be)}(m(u)\I xm(u))-\phi_{\be}(x)$ is independent of $\mu$. We know that this quantity equals
\begin{align*}
\phi_{s_{\al}(\be)}(m_{-\al}(-\la\I)\I x_{\be}(\mu)m_{-\al}(-\la\I))-\nu(\mu)
\\
=\phi_{s_{\al}(\be)}(x_{s_{\al}(\be)}(\pm\la^{-\gen s_{\al}(\be),-\al\by}\mu))-\nu(\mu)
\end{align*}
by (R), since $s_{-\al}=s_{\al}$. This equals
$$\nu(\pm\la^{\gen s_{\al}(\be),\al\by}\mu)-\nu(\mu)=\nu(\pm\la^{\gen s_{\al}(\be),\al\by})$$
by (V1) and (V4). This quantity is indeed independent of $\mu$ and so (VRGD3) follows.

Note that when $\al=\be$ we have $s_{\al}(\be)=s_{\al}(\al)=-\al$. Thus
\begin{align*}
\nu(\pm\la^{\gen s_{\al}(\be),\al\by})=\nu(\pm\la^{-\gen \al,\al\by})=\nu(\pm\la^{-2})=-2\nu(\la)=-2\phi_{\al}(u)
\end{align*}
by (V1) and (V4), so (VRGD4) holds. Thus, all the axioms are satisfied and $(G,(U_{\al})_{\al\in\Phi},(\phi_{\al})_{\al\in\Phi})$ is a VRGD system.
\end{proof}

Since $(G,(U_{\al})_{\al\in\Phi},(\phi_{\al})_{\al\in\Phi})$ is a VRGD system one can use \cite{weiss09:_struct_affine_build}*{Theorem~14.38} to conclude that $G$ has an affine $BN$-pair $(B_a,N)$, with affine Weyl group $W_a=N/B_a\cap N$. By \cite{abramenko08:_build_theory_apps}*{Theorem~6.56} one gets an affine building $\Delta$ on which $G$ acts strongly transitively with respect to $G.\So$. If one makes the further assumption that $K$ is complete with respect to the metric induced by $\nu$, then by \cite{weiss09:_struct_affine_build}*{Theorem~17.7~and~17.9} one knows that $G$ in fact acts strongly transitively on $\Delta$ with respect to the \emph{complete} apartment system. Since $\Q_p$ is complete, one can apply Corollary~\ref{torcoset} to the setup of $\ger{g}(\Q_p)$ acting on the corresponding building $\Delta$.

Of course, by virtue of $(G,(U_{\al})_{\al\in\Phi})$ being an RGD system, there is also a spherical $BN$-pair $(B,N)$ with spherical Weyl group $W=N/B\cap N$. Looking at the construction of the affine and spherical $BN$-pairs, one sees that the subgroup $N$ is the same in both cases, namely $N=\gen m_{\al}(\la)|\al\in\Phi,\la\in K^*\by$; see \cite{steinberg67:_chev_gps}*{Lemma~22} and \cite{weiss09:_struct_affine_build}*{14.3}. Also, $T_a\subseteq T$, where $T_a=B_a\cap N=\gen h_{\al}(\la)|\al\in\Phi,\la\in A^{\times}\by$ and $T=B\cap N=\gen h_{\al}(\la)|\al\in\Phi,\la\in K^*\by$, with $A$ the valuation ring of $K$.

By \cite{steinberg67:_chev_gps}*{Lemma~22(b,c)} $W$ is isomorphic to the Weyl group of the root system $\Phi$ (thus the name), and so any $w\in W$ acts as an orthogonal transformation on the Euclidean vector space $E$ spanned by $\Phi$. We will see in the next section that it will be useful to think of $W$ interchangeably as both the quotient $N/T$ and as the Weyl group of the root system $\Phi$, acting on $E$. We also note that $W_a$ is in fact the affine Weyl group associated to $\Phi$, though we will not use this explicitly.

\section{Strongly transitive subgroups of $\ger{g}(\Q_p)$ have torsion}
\label{sec:strong-trans-torsion}

The goal of this section is to establish Proposition~\ref{prop1}. Let $G=\ger{g}(\Q_p)$, with affine $BN$-pair $(B_a,N)$ and affine Weyl group $W_a$, and let $\Delta$ be the canonical affine building associated to $G$ as described in the previous section. We restate the proposition:

\begin{proposition}\label{mainprop}
No torsionfree subgroup of $G$ acts strongly transitively on $\Delta$.
\end{proposition}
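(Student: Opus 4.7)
The plan is to reduce to Corollary~\ref{torcoset}, applied with its ``$T$'' being the affine torus $T_a = B_a \cap N$ (the pointwise fixer of $\So$ in the affine setting, using the fact from Section~\ref{sec:chev_gps} that $G=\ger{g}(\Q_p)$ acts strongly transitively on $\Delta$ with respect to $\overline{\A}$). Since strong transitivity implies weak transitivity, it suffices to exhibit $n_0 \in N \setminus T_a$ such that every element of $n_0 T_a$ has finite order. Because $T_a \subseteq T$, where $T=\gen h_\al(\la)|\al\in\Phi,\la\in K^*\by$ is the spherical torus, it is in fact enough to find a non-trivial $w_0 \in W = N/T$ all of whose lifts in $N$ have finite order: any such lift $n_0$ then satisfies $n_0 \notin T \supseteq T_a$, and the coset $n_0 T_a \subseteq n_0 T$ is entirely torsion.

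For $w_0$, I will take a Coxeter element $w_0 = s_{\al_1} s_{\al_2} \cdots s_{\al_r}$, the product of the reflections associated to the simple roots (if $\Phi$ is reducible, do this in each irreducible component). Two classical facts about Coxeter elements will drive the argument: (i) $w_0$ has order equal to the Coxeter number $h$, and (ii) $w_0$ has no nonzero fixed vector on the Euclidean space $E$ spanned by $\Phi$. As a lift I take
$$n_0 := m_{\al_1}(1) m_{\al_2}(1) \cdots m_{\al_r}(1) \in N,$$
whose crucial feature is that each factor, and hence $n_0$ itself, belongs to the integral Chevalley group $\ger{g}(\Z) \subseteq \ger{g}(\Q_p)$.

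For any $t \in T$, using $n_0 t n_0\I = w_0(t)$ together with the commutativity of $T$, a quick induction shows
$$(n_0 t)^h \;=\; n_0^h \cdot N_{w_0}(t), \qquad \text{where } N_{w_0}(t) := \prod_{i=0}^{h-1} w_0^i(t).$$
The norm $N_{w_0}(t)$ lies in $T^{w_0}$; by property (ii) the cocharacter lattice $X_*(T)^{w_0}$ is trivial, so the identity component $(T^{w_0})^\circ$ vanishes and $T^{w_0}$ is finite. On the other hand, $n_0 \in \ger{g}(\Z)$ forces $n_0^h \in T \cap \ger{g}(\Z) = T(\Z)$, which is generated by the involutions $h_\al(-1)$ and hence is a finite elementary abelian $2$-group. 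Therefore $(n_0 t)^h$ lies in the finite subgroup $T(\Z) \cdot T^{w_0}$ of $T$, so $n_0 t$ has finite order in $\ger{g}(\Q_p)$.

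The only non-formal inputs are the two classical facts (i) and (ii) about Coxeter elements, both standard, and the identification $T(\Z) \cong (\Z/2)^{\,\mathrm{finite}}$. The computation of $(n_0 t)^h$, the finiteness of $T^{w_0}$, and the resulting invocation of Corollary~\ref{torcoset} are then formal. The main conceptual point---and what I would regard as the only genuinely content-bearing step---is simply the recognition that a Coxeter element, together with its canonical integral lift, simultaneously supplies properties (i) and (ii) while giving a representative defined over $\Z$; once this is in hand, the rest is unwinding.
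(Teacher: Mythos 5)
Your proposal is correct and follows essentially the same route as the paper: reduce to Corollary~\ref{torcoset} via $T_a\subseteq T$, take a Coxeter element $w_0$ (no eigenvalue $1$), lift it to $n_0=\prod_i m_{\al_i}(1)$ so that $n_0^h$ lands in a finite exponent-$2$ subgroup of $T$, and control the twisted norm $\prod_i w_0^i(t)$ appearing in $(n_0t)^h$. The one genuine difference is the final step: the paper shows this norm is exactly trivial by letting $h_{\al}(\la)$ act on the weight space $V_\ga$ via $\la^{\gen\ga,\al\by}$ and using $\sum_i w_0^i(\be)=0$, which yields the sharper conclusion of Theorem~\ref{chev_tor_criterion} that every lift has order exactly $m$ or $2m$; you instead observe only that the norm lies in $T^{w_0}$, which is finite because $1-w_0$ is injective on the (co)character lattice. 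Your variant is slightly more structural and fully suffices for the proposition, at the cost of the uniform-order statement and of the criterion's converse.
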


By Corollary~\ref{torcoset} it suffices to exhibit a non-trivial element $w$ in $W_a=N/T_a$ such that all representatives of $w$ in $N$ have finite order in $G$. Recall that $T_a\leq T$, so if there exists $n$ in $N\backslash T$ such that all elements of the coset $nT$ have finite order, then also all elements of the non-trivial coset $nT_a$ will have finite order. We may thus shift our search to the spherical Weyl group $W=N/T$. In fact it does happen that $W$ can be realized as a subgroup of $W_a$, though we will not need to use this explicitly.

First some additional setup is necessary.

Let $N\geq N_0:=\gen m_{\al}(1)|\al\in\Phi\by$ and let $T_0:=\gen h_{\al}(-1)|\al\in\Phi\by$. Since $m_{\al}(-1)=m_{\al}(1)\I$, in fact $h_{\al}(-1)=m_{\al}(1)^{-2}$, so $T_0\leq N_0$. Also, by \cite{steinberg67:_chev_gps}*{Lemma~20(a)}, $m_{\al}(1)h_{\be}(-1)m_{\al}(1)\I=h_{s_{\al}(\be)}(-1)$, so $T_0\triangleleft N_0$.

\begin{lemma}
$W=N_0/T_0$.
\end{lemma}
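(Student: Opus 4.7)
The plan is to construct mutually inverse homomorphisms between $N_0/T_0$ and $W = N/T$. In one direction, the inclusion $N_0 \into N$ followed by $N \onto N/T = W$ sends $T_0$ to $1$ (because $T_0 \subseteq T$ by definition), so it descends to a homomorphism $\bar\pi \colon N_0/T_0 \to W$ with $\overline{m_{\al}(1)} \mapsto s_{\al}$. Since $W$ is generated by the simple reflections $s_{\al_i}$ and each is the image of $m_{\al_i}(1) \in N_0$, the map $\bar\pi$ is surjective.

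In the other direction, I would define $\psi \colon W \to N_0/T_0$ on simple reflections by $\psi(s_{\al_i}) := \overline{m_{\al_i}(1)}$. For this to extend to a homomorphism from the Coxeter group $W$, the relations $s_{\al_i}^2 = 1$ and $(s_{\al_i} s_{\al_j})^{m_{ij}} = 1$ must hold in $N_0/T_0$. The first is immediate, since $m_{\al_i}(1)^2 = h_{\al_i}(-1) \in T_0$. The second follows from the braid identities $m_{\al_i}(1) m_{\al_j}(1) m_{\al_i}(1) \cdots = m_{\al_j}(1) m_{\al_i}(1) m_{\al_j}(1) \cdots$ (with $m_{ij}$ factors on each side), which hold exactly in any Chevalley group by Steinberg's analysis of $N$ in \cite{steinberg67:_chev_gps}. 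Combining a braid identity with $m_{\al_i}(1)^2 \in T_0$ and the normality of $T_0$ in $N_0$ (established earlier in this section) yields $(m_{\al_i}(1) m_{\al_j}(1))^{m_{ij}} \in T_0$ by a short computation.

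The composition $\bar\pi \circ \psi$ acts as the identity on generators of $W$, so $\bar\pi \circ \psi = \operatorname{id}_W$. For $\psi \circ \bar\pi = \operatorname{id}_{N_0/T_0}$, I would verify it on each generator $\overline{m_{\al}(1)}$. For non-simple $\al$, writing $s_\al = w s_\be w\I$ in $W$ with $\be$ simple and lifting $w$ to a product $\tilde w$ of simple $m_{\al_i}(1)$'s, one computes $\tilde w\, m_\be(1)\, \tilde w\I = m_\al(\pm 1)$ in $G$ by repeated application of relation (R) from Section~\ref{sec:chev_gps}; since $m_\al(-1) = h_\al(-1) m_\al(1) \equiv m_\al(1) \pmod{T_0}$, this gives $\psi(s_\al) = \overline{m_\al(1)}$, as required.

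The main obstacle is the braid-relation step: that the $m_{\al_i}(1)$ satisfy the Coxeter braid relations of $W$ in $\ger{g}(K)$. This is a nontrivial classical result (due to Steinberg, or equivalently Matsumoto) which I would cite rather than re-derive; granting it, the rest of the argument is a straightforward verification.
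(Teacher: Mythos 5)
Your proof is correct, but it takes a genuinely different route from the paper's. The paper constructs only one map, $\phi_0\colon W\to N_0/T_0$, $s_\al\mapsto T_0m_\al(1)$, and checks well-definedness against the presentation of $W$ by \emph{all} reflections $\{s_\al\}_{\al\in\Phi}$ subject to $s_\al^2=1$ and $s_\al s_\be s_\al\I=s_{s_\al(\be)}$; the matching relations in $N_0/T_0$ come from the single easy Chevalley relation $m_\al(1)m_\be(1)m_\al(1)\I=m_{s_\al(\be)}(\pm1)$ (Steinberg, Lemma~20(b)) together with $T_0m_\al(-1)=T_0m_\al(1)$. Injectivity is then obtained separately by quoting the proof of Steinberg's Lemma~22(c): a product $m_{\al_1}(1)\cdots m_{\al_k}(1)$ lying in $T$ forces $s_{\al_1}\cdots s_{\al_k}=1$. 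You instead build mutually inverse maps, defining the section $\psi$ on the Coxeter presentation of $W$; this shifts the burden onto the braid identities for the $m_{\al_i}(1)$, which is a genuinely nontrivial classical fact (usually attributed to Tits, \emph{Normalisateurs de tores~I}, rather than appearing in that form in Steinberg's Yale notes -- so make sure your citation is precise), and it costs you the extra verification that $\psi(s_\al)=\overline{m_\al(1)}$ for non-simple $\al$. What you buy in exchange is that injectivity is automatic from the two-sided inverse, so you never need Steinberg's Lemma~22(c); what the paper buys is that it needs only the conjugation relation, at the price of relying on the reflection presentation of $W$ and on Steinberg's injectivity argument. Both arguments are sound; yours is a legitimate alternative provided the braid-relation input is properly sourced.
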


\begin{proof} The proof is similar to \cite{steinberg67:_chev_gps}*{Lemma~22(b,c)}. Define a homomorphism $\phi_0:W\rightarrow N_0/T_0$ by $\phi_0(s_{\al})=T_0m_{\al}(1)$. To check this is well-defined one checks that the relations in $W$ are satisfied in $N_0/T_0$. Note that
$$T_0m_{\al}(1)=T_0h_{\al}(-1)m_{\al}(1)=T_0m_{\al}(-1)=T_0m_{\al}(1)\I$$
so for any $\al$, $s_{\al}^2\mapsto T_0m_{\al}(1)m_{\al}(1)\I=T_0$. Also,
$$s_{\al}s_{\be}s_{\al}\I s_{s_{\al}(\be)}\I\mapsto T_0m_{\al}(1)m_{\be}(1)m_{\al}(1)\I m_{s_{\al}(\be)}(1)\I.$$
But this is just $T_0$ since $m_{\al}(1)m_{\be}(1)m_{\al}(1)\I=m_{s_{\al}(\be)}(c)$ by \cite{steinberg67:_chev_gps}*{Lemma~20(b)}, where $c=\pm1$. If $c=-1$ one must also again use the fact that $T_0m_{\al}(-1)=T_0m_{\al}(1)$. These relations define $W$, so $\phi_0$ is well-defined, and is clearly surjective. Now suppose $w=s_{\al_1}\dots s_{\al_k}\mapsto T_0$, so $m_{\al_1}(1)\dots m_{\al_k}(1)\in T_0$. In particular, $m_{\al_1}(1)\dots m_{\al_k}(1)\in T$ and so by the proof of \cite{steinberg67:_chev_gps}*{Lemma~22(c)}, $w=1$. Thus, $\phi_0$ is an isomorphism.
\end{proof}

Now one can establish a criterion on $w$ whereby all representatives in $N$ will have finite order.

\begin{theorem}\label{chev_tor_criterion}
Let $W=N/T$ be the spherical Weyl group corresponding to the Chevalley group $\ger{g}(K)$. Let $w\in W$. Then the following are equivalent:
\begin{enumerate}[\rm (i)]
\item
As an orthogonal transformation of $E$, $w$ does not have eigenvalue 1.
\item
For any field $K$, every representative of $w$ in $N$ has finite order in $N$.
\end{enumerate}
\end{theorem}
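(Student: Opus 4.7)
I begin by fixing $w \in W$ of order $r$ and a specific lift $n_0 \in N_0$ of $w$ (arising from a reduced expression in the generators $m_{\al_i}(1)$). Every representative of $w$ in $N$ then has the form $n_0 t$ with $t \in T$, and since $T$ is abelian and normal in $N$, conjugation by $N$ on $T$ factors through $W = N/T$; writing $w\cdot t := n_0 t n_0\I$, a direct induction yields
\[
(n_0 t)^r \;=\; n_0^r \cdot \prod_{j=0}^{r-1} w^j \cdot t \;=:\; n_0^r \cdot \mathrm{Nm}_w(t).
\]
Using the preceding lemma ($W = N_0/T_0$), $n_0^r$ lies in $T_0 = \gen h_\al(-1)|\al\in\Phi\by$, which is a finite elementary abelian $2$-group (since $h_\al(-1)^2 = h_\al(1) = 1$ and $T$ is abelian), so $n_0$ itself has finite order. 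The theorem is thus reduced to determining when $\mathrm{Nm}_w(t)$ has finite order in $T$ for every $t \in T$ and every field $K$.

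For (i)~$\Rightarrow$~(ii), I would use the $W$-equivariant surjection $\phi \colon Q^\vee \otimes_\Z K^* \onto T$, $\al^\vee \otimes \la \mapsto h_\al(\la)$, where $W$ acts on the coroot lattice $Q^\vee \subset E$ via its reflection representation. If $w$ has no eigenvalue $1$ on $E$ then $(w-1)$ is invertible on $Q^\vee \otimes \Q$, and from the identity $(w-1)(1 + w + \cdots + w^{r-1}) = w^r - 1 = 0$ in $\Z[\gen w \by]$ one concludes that $\mathrm{Nm}_w$ acts as $0$ on $Q^\vee \otimes \Q$, hence on $Q^\vee$ (torsion-free), hence on $Q^\vee \otimes K^*$ for any $K$, and by surjectivity of $\phi$ also on $T$. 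So $(n_0 t)^r = n_0^r$ is of finite order, and every representative of $w$ has finite order in $N$.

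For (ii)~$\Rightarrow$~(i) I would argue the contrapositive. If $w$ has eigenvalue $1$ on $E$ then $(Q^\vee)^w$ has positive rank, so contains a nonzero element $\al^\vee$. Taking $K = \Q_p$ and $t = \phi(\al^\vee \otimes p) \in T$, one has $\mathrm{Nm}_w(t) = \phi(\al^\vee \otimes p^r)$. To see this has infinite order in $T$, I would pick a weight $\mu \in \Lambda$ with $\gen \mu, \al^\vee \by \neq 0$ (possible since $\al^\vee \neq 0$ and $\Lambda$ has full rank) and apply the character $\chi_\mu \colon T \to \Q_p^*$ afforded by the $T$-action on the weight space $V_\mu$ of the defining representation of $\ger{g}(\Q_p)$. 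Since $\chi_\mu(h_\al(\la)) = \la^{\gen \mu, \al^\vee \by}$, one obtains $\chi_\mu(\mathrm{Nm}_w(t)) = p^{r\gen \mu, \al^\vee \by}$, which has nonzero $p$-adic valuation and so is of infinite order in $\Q_p^*$; hence $\mathrm{Nm}_w(t)$ has infinite order in $T$. In the abelian group $T$, the product $(n_0 t)^r = n_0^r \cdot \mathrm{Nm}_w(t)$ of a finite-order and an infinite-order element itself has infinite order, giving an infinite-order representative of $w$. The main technical care lies in setting up $\phi$ and $\chi_\mu$ $W$-equivariantly for arbitrary weight lattices $\Lambda$ as in Section~\ref{sec:chev_gps}; once that is done, the rest is driven by the ring-theoretic identity $\mathrm{Nm}_w\cdot(w-1) = 0$.
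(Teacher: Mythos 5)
Your proof is correct and follows essentially the same route as the paper's: lift $w$ to $n_0\in N_0$ so that $n_0^r$ lands in the elementary abelian $2$-group $T_0$, reduce the question to the norm $\prod_j w^j\cdot t$ on $T$, and detect its (non)triviality via the characters $\la\mapsto\la^{\langle\mu,\al^\vee\rangle}$ coming from the weight spaces (Steinberg's Lemma 19(c)). The differences are only in packaging: you route the computation through the $W$-equivariant surjection $Q^\vee\otimes_{\Z} K^*\onto T$ and, in the reverse direction, use $K=\Q_p$ with the element $p$ and a $w$-fixed coroot, where the paper computes directly with products $\prod_i h_{w^i(\be_j)}(\la_j)$ and uses $K=\Q$ with the element $2$ and a root $\be$ of nonzero norm.
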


\begin{proof}[Proof of the forward implication]
Let $w\in W$ have order $m$, and suppose 1 is not an eigenvalue of $w$. Since for any $v\in E$ we have
$$w(v+w(v)+\dots+w^{m-1}(v))=v+w(v)+\dots+w^{m-1}(v),$$
the hypothesis forces $1+w+\dots+w^{m-1}$ to be zero. Now, since $W=N_0/T_0$, there exists a representative $n_0\in N_0$ of $w$. Since $w^m=1$, $n_0^m\in T_0$. But $T_0$ is abelian and $h_{\al}(-1)^2=1$, so everything in $T_0$ has order 1 or 2. Since $m$ must divide the order of $n_0$, we know that $n_0$ has order $m$ or $2m$. Now let $h$ be any element of $T$. Say $n_0=m_{\al_1}(\ep_1)\dots m_{\al_k}(\ep_k)$ and $h=h_{\be_1}(\la_1)\dots h_{\be_{\ell}}(\la_{\ell})$. Here each $\ep_i$ is either 1 or -1, since $n_0\in N_0$ and $m_{\al}(1)\I=m_{\al}(-1)$. Note that since $n_0$ represents $w$ in $W$, we have that $w=s_{\al_1}\dots s_{\al_k}$.

Using the Chevalley relation
$$m_{\al}(1)h_{\be}(\la)m_{\al}(1)\I=m_{\al}(1)\I h_{\be}(\la)m_{\al}(1)=h_{s_{\al}(\be)}(\la),$$
one gets that $n_0h=h_{w(\be_1)}(\la_1)\dots h_{w(\be_{\ell})}(\la_{\ell})n_0$. Repeating this, one gets that

$$(n_0h)^m=\left(\prod_{i=1}^m\prod_{j=1}^{\ell}h_{w^i(\be_j)}(\la_j)\right)n_0^m=\left(\prod_{j=1}^{\ell}\prod_{i=1}^m h_{w^i(\be_j)}(\la_j)\right)n_0^m.$$
This last step follows since $T$ is abelian. Now, for any $j$ and for any weight $\gamma$,

$$\prod_{i=1}^m\la_j^{\langle\gamma,w^i(\be_j)\rangle}=\la_j^{\sum_{i=1}^m\langle\gamma,w^i(\be_j)\rangle}.$$
Since $w$ is an orthogonal transformation and $\{,\}$ is bilinear,

\begin{eqnarray*}\sum_{i=1}^m\langle\gamma,w^i(\be_j)\rangle &=& \sum_{i=1}^m2\{\gamma,w^i(\be_j)\}/\{w^i(\be_j),w^i(\be_j)\}\\&=&\sum_{i=1}^m2\{\gamma,w^i(\be_j)\}/\{\be_j,\be_j\}\\&=&\frac{2}{\{\be_j,\be_j\}}\left\{\gamma,\sum_{i=1}^mw^i(\be_j)\right\}=0
\end{eqnarray*}
for each $j$. Thus, $\displaystyle\prod_{i=1}^m\la_j^{\langle\gamma,w^i(\be_j)\rangle}=1$, regardless of the field $K$. By \cite{steinberg67:_chev_gps}*{Lemma~19(c)}, elements $h_{\al}(\la)$ of $T$ act on the weight space $V_{\gamma}$ via multiplication by $\la^{\langle\gamma,\al\rangle}$, and so in fact $\displaystyle\prod_{i=1}^m h_{w^i(\be_j)}(\la_j)=1$ for each $j$. One concludes that $(n_0h)^m=n_0^m$ for any $h\in T$. Since $w$ has order $m$, this implies that all representatives of $w$ must have the same order, and the result follows.
\end{proof}

Note that this proves something stronger. Every representative has finite order, and in fact they all have \emph{the same} order, either $m$ or $2m$. It is a quick exercise to check that such a $w$ exists, in fact any Coxeter element of $W$ will work, as seen in \cite{humphreys92:_refl_gps_and_cox_gps}*{Section~3.16~Lemma}. For completeness we will prove the reverse implication of Theorem~\ref{chev_tor_criterion}, though it is not needed to prove Proposition~\ref{mainprop}.

\begin{proof}[Proof of reverse implication]
Let $K=\Q$. Suppose $0\neq v\in E$ is a 1-eigenvector. Then $v+w(v)+\dots+w^{m-1}(v)=mv\neq0$, and so $1+w+\dots+w^{m-1}\neq0$ as a linear transformation. Since the roots span $E$, there exists a root $\be$ such that $\be+w(\be)+\dots+w^{m-1}(\be)\neq0$. Choose a representative $n_0\in N_0$ as before, so $n_0^m\in T_0$, say $n_0^m=h_{\al_1}(-1)\dots h_{\al_k}(-1)$. Then for any $r\in\mathbb{N}$,

$$(n_0h_{\be}(2))^{rm}=\left(\prod_{i=1}^{rm} h_{w^i(\be)}(2)\right)n_0^{rm}.$$
(We chose $K=\Q$ but in fact, any $K$ that is not an algebraic extension of a finite field will work; we just need an element with infinite multiplicative order; for $K=\Q$ we have used the number 2.) Suppose this equals 1 for some $r$. Then by \cite{steinberg67:_chev_gps}*{Lemma~19(c)}, for any weight $\gamma$ we have

$$1=\prod_{i=1}^{rm}2^{\langle\gamma,w^i(\be)\rangle}\prod_{j=1}^k(-1)^{r\langle\gamma,\al_k\rangle}=\pm\prod_{i=1}^{rm}2^{\langle\gamma,w^i(\be)\rangle}.$$
By the same argument as before, this equals

$$\pm2^{\frac{2}{\{\be,\be\}}\left\{\gamma,\sum_{i=1}^{rm}w^i(\be)\right\}}=\pm2^{\frac{2}{\{\be,\be\}}\left\{\gamma,r\sum_{i=1}^mw^i(\be)\right\}}.$$
The only way this can equal 1 is if $\displaystyle\left\{\gamma,r\sum_{i=1}^mw^i(\be)\right\}=0$. But since $\displaystyle r\sum_{i=1}^mw^i(\be)\neq0$, this is impossible, since one can always choose a weight $\gamma$ to be not orthogonal to $\displaystyle r\sum_{i=1}^mw^i(\be)$. Since the $rm$ are the only candidates for a finite order of $n_0h_{\be}(2)$, in fact it has infinite order. Since $n_0h_{\be}(2)$ is a representative of $w$, the theorem follows.
\end{proof}

Proposition~\ref{mainprop} now follows from Corollary~\ref{torcoset} and Theorem~\ref{chev_tor_criterion}, and the fact that Coxeter elements do not have eigenvalue 1.

\begin{remark}\label{rapinchuk_remark}
There is also a very nice, shorter proof due to A. Rapinchuk that all the representatives of $w$ have finite order if 1 is not an eigenvalue
of $w$ \cite{rapinchuk}. In fact his proof shows that each representative has order dividing
$m^2$, where $m=|w|$. Coupling the two proofs, we conclude that if $m$ is odd, then since representatives cannot have order $2m$ they must all have order $m$.
\end{remark}

\section{Torsionfree Weyl transitive subgroups of $\ger{g}(\Q_p)$}
\label{sec:weyl-trans-torfree}

One now has the tools to produce examples of Weyl transitive group actions on buildings that are not strongly transitive with respect to any apartment system, proving Proposition~\ref{prop2}. Let $(B_a,N)$ be the affine $BN$-pair of $G=\ger{g}(\Q_p)$ as described in Section~\ref{sec:chev_gps}. Think of $G$ as a subgroup of $\SL_d(\Q_p)$ for some $d$, and let $(\widetilde{B},\widetilde{N})$ be the usual affine $BN$-pair of $\SL_d(\Q_p)$ described in \cite{abramenko08:_build_theory_apps}*{Section~6.9}. The construction of $B_a$ shows that it is contained in $\widetilde{B}\cap G$. Also, $\widetilde{B}\cap G$ contains no nontrivial representatives of the affine Weyl group of $G$. Thus by looking at the affine Bruhat decomposition given by $(G,B_a)$ one sees that $B_a=G\cap\widetilde{B}$. This proves that $B_a$ is open in $G$, and so by Lemma~\ref{dense-subgroups} any dense subgroup of $G$ acts Weyl transitively on $\Delta$. Thus by Proposition~\ref{mainprop}, any dense, torsionfree subgroup of $G$ will act Weyl transitively but not strongly transitively on $\Delta$. We now exhibit a number of such subgroups, establishing Proposition~\ref{prop2}.

Let $\G=\ger{g}(\Z[\frac{1}{p}])$. While we technically have only been considering Chevalley groups over fields, this is allowed; see \cite{steinberg67:_chev_gps}*{Section~3}. Let $q$ be any nonzero integer prime to $p$, so it makes sense to reduce the entries of matrices in $\G$ mod $q$. Define the \emph{congruence subgroup} $\G_q$ to be $\G_q := \{A\in\G : A\equiv I_d\mod{q}\}$, where matrices are taken mod $q$ entry-wise. This is the kernel of the restriction to $\G$ of the natural group homomorphism $\SL_d(\Z[\frac{1}{p}])\rightarrow\SL_d(\Z[\frac{1}{p}]/q\Z[\frac{1}{p}])$, so it really is a subgroup. We will show that for any $q>2$ prime to $p$, $\G_q$ is both torsionfree and dense in $G$.

\begin{lemma}\label{densitylemma}

For any nonzero $q$ in $\Z$, $\G_q$ is dense in $G$.

\end{lemma}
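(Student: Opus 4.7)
The plan is to establish density by showing that $\overline{\G_q}$ contains every root subgroup $U_\al$ of $G$, and then invoking the fact that $G$ is generated as an \emph{abstract} group by $\bigcup_{\al \in \Phi} U_\al$.

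To see $U_\al \subseteq \overline{\G_q}$, I would first observe that $x_\al(q\mu) \in \G_q$ for every $\mu \in \Z[\tfrac{1}{p}]$. This uses the Chevalley integrality fact (see \cite{steinberg67:_chev_gps}*{Section~3}) that in the fixed faithful representation of $\ger{g}$, the matrix entries of $x_\al(\la)$ are polynomials in $\la$ with integer coefficients and constant term $\delta_{ij}$; hence $x_\al(\la) - I_d$ has all entries divisible by $\la$, and specialising $\la = q\mu \in q\Z[\tfrac{1}{p}]$ yields $x_\al(q\mu) \equiv I_d \pmod{q}$. Since $\Z[\tfrac{1}{p}]$ is dense in $\Q_p$ and multiplication by the nonzero integer $q$ is a homeomorphism of $\Q_p$, the set $q\Z[\tfrac{1}{p}]$ is dense in $\Q_p$; transporting this across the topological isomorphism $\la \mapsto x_\al(\la)$ from $(\Q_p,+)$ to $U_\al$ shows that $\{x_\al(q\mu) : \mu \in \Z[\tfrac{1}{p}]\} \subseteq \G_q$ is dense in $U_\al$.

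Next I would verify $G = \gen U_\al | \al \in \Phi\by$ as abstract groups, i.e.\ that no topological closure is required on the right. By (RGD4) we have $G = T\gen U_\al | \al \in \Phi\by$, so it suffices to prove $T \subseteq \gen U_\al | \al \in \Phi\by$. But $T$ is generated by the elements $h_\al(\mu)$ (see \cite{steinberg67:_chev_gps}*{Lemma~22}), and $h_\al(\mu) = m_\al(\mu)m_\al(1)\I$ with $m_\al(\mu) = x_\al(\mu)x_{-\al}(-\mu\I)x_\al(\mu) \in \gen U_\al, U_{-\al}\by$. Hence $T$ lies in $\gen U_\al | \al \in \Phi\by$ and the desired equality holds.

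Combining the two steps, $\overline{\G_q}$ is a subgroup of $G$ containing each $U_\al$, so it contains $\gen U_\al | \al \in \Phi\by = G$, and thus $\overline{\G_q} = G$. The only genuinely nontrivial input is the integrality statement in the first step; everything else is a formal consequence of the Chevalley presentation, the topology of $\Q_p$, and the general fact that the closure of a subgroup of a topological group is again a subgroup.
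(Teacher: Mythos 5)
Your proposal is correct and follows essentially the same route as the paper: show $q\Z[\tfrac{1}{p}]$ is dense in $\Q_p$, deduce that $\G_q$ contains a dense subset of each root subgroup $U_\al$ via the integrality of the entries of $x_\al(\la)$, and conclude since the $U_\al$ generate $G$. Your additional verification that $T\subseteq\gen U_\al\mid\al\in\Phi\by$ is harmless but unnecessary here, since the paper takes the elements $x_\al(\la)$ as the defining generators of $\ger{g}(\Q_p)$.
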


\begin{proof}

Since the topological closure of $\Z[\frac{1}{p}]$ contains $\Z_p$ and $1/p$, $\Z[\frac{1}{p}]$ is dense in $\Q_p$. Also $\Q_p=q\Q_p$, so $q\Z[\frac{1}{p}]$ is dense in $\Q_p$. Thus for any $\al\in\Phi$, the set $\{x_{\al}(\la)\mid \la\in q\Z[\frac{1}{p}]\}$ is dense in $\{x_{\al}(\la)\mid \la\in \Q_p\}$. Since the latter set generates $\ger{g}(\Q_p)$, it now suffices to show that $x_{\al}(\la)\in\G_q$ for any $\al\in\Phi$, $\la\in q\Z[\frac{1}{p}]$. Since $x_{\al}(\la)$ has entries 1 on the main diagonal and entries congruent to $0\mod{q}$ off the main diagonal, it is clear that $x_{\al}(\la)\equiv I_d\mod{q}$. Also since $\G\cap\{A\in\SL_d(\Z[\frac{1}{p}])\mid A\equiv I_d\mod{q}\}=\G_q$ and $x_{\al}(\la)\in\G$, one sees that indeed $x_{\al}(\la)\in\G_q$.

\end{proof}


\begin{lemma}\label{torsionfree_lemma}

For any $q$ prime to $p$ with $q>2$, $\G_q$ is torsionfree.

\end{lemma}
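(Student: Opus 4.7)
The plan is to apply a version of the classical Minkowski torsionfree-congruence-subgroup argument, adapted to $\SL_d(\Z[\frac{1}{p}])$ (which contains $\G_q$, so it suffices to prove the stronger statement that the level-$q$ principal congruence subgroup of $\SL_d(\Z[\frac{1}{p}])$ is torsionfree). Suppose for contradiction that some $A \in \G_q$ has finite order $n > 1$. Replacing $A$ by a suitable power, we may assume $A$ has prime order $\ell$. Pick any prime $r$ dividing $q$; since $\gcd(q,p) = 1$ we have $r \neq p$, so $\Z[\frac{1}{p}] \hookrightarrow \Z_r$ and we may view $A \in \SL_d(\Z_r)$. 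Because $A \equiv I \pmod{q}$ and $A \neq I$, we can write
$$A = I + r^{k_0} M, \qquad M \in M_d(\Z_r),\ M \not\equiv 0 \pmod{r},\ k_0 \geq v_r(q) \geq 1,$$
where $v_r$ is the $r$-adic valuation.

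Now expand $A^\ell = I$ via the binomial theorem to obtain
$$0 = \sum_{i=1}^{\ell} \binom{\ell}{i} r^{ik_0} M^i.$$
The first case is easy: if we can choose $r \mid q$ with $r \neq \ell$, then the $i=1$ term $\ell r^{k_0} M$ has strictly smaller $r$-adic valuation than all higher-order terms (since $\gcd(\ell,r)=1$ and $ik_0 \geq 2k_0 > k_0$ for $i \geq 2$), forcing $M \equiv 0 \pmod{r}$, a contradiction. So we may assume every prime divisor of $q$ equals $\ell$, i.e. $q = \ell^a$ for some $a \geq 1$.

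The main obstacle is the case $r = \ell$, and this is exactly where the hypothesis $q > 2$ enters. For $\ell$ odd we have $v_\ell\!\bigl(\binom{\ell}{i}\bigr) = 1$ for $1 \leq i \leq \ell-1$, so the $i=1$ term in the binomial expansion has valuation $k_0+1$, while for $2 \leq i \leq \ell-1$ the $i$th term has valuation $\geq 1 + ik_0 > k_0+1$, and the $i=\ell$ term has valuation $\ell k_0 > k_0+1$ (using $\ell \geq 3$ and $k_0 \geq 1$). Dividing by $\ell^{k_0+1}$ yields $M \equiv 0 \pmod{\ell}$, contradiction. For $\ell = 2$ the binomial expansion collapses to $2^{k_0+1} M + 2^{2k_0} M^2 = 0$; since $q = 2^a > 2$ gives $a \geq 2$, hence $k_0 \geq 2$, we have $2k_0 \geq k_0+2 > k_0+1$, and the same style of argument again forces $M \equiv 0 \pmod{2}$, contradiction.

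Thus in every case $A = I$, and $\G_q$ is torsionfree. The only step requiring real care is verifying the $\ell$-adic valuations of the binomial coefficients in the case $r = \ell$; the small-prime case $\ell = 2$ is precisely the one that fails for $q = 2$ (witness $-I$ when $d$ is even), explaining the hypothesis $q > 2$.
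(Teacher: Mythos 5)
Your proof is correct and is essentially the same Minkowski-style argument as the paper's: both expand $(I+B)^{\ell}=I$ binomially and reach a contradiction by comparing divisibility of the linear term against the higher-order terms. The only organizational difference is that you localize at a prime divisor $r$ of $q$ and compare $r$-adic valuations (splitting into the cases $r\neq\ell$, $r=\ell$ odd, and $r=\ell=2$), whereas the paper works with congruences modulo powers of the possibly composite $q$ directly, first deducing $q\mid r$ and hence that $q$ is an odd prime; both routes use $q>2$ in exactly the same place, to dispose of the order-$2$ case.
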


\begin{proof}

Let $A\in\G_q$ with $A^r=I_d$. Suppose for a contradiction that $r>1$. By replacing $A$ with an appropriate power one may assume $r$ is prime. Let $B=A-I_d\in M_d(\Z[\frac{1}{p}])$, so $B\equiv 0\mod{q}$. Then $I_d=(I_d+B)^r$, and by the binomial expansion there exists $C\in M_d(\Z[\frac{1}{p}])$ such that $(I_d+B)^r=I_d+rB+CB^2$. Thus $rB=-CB^2$. Choose $s\geq1$ such that $B\equiv 0\mod{q^s}$ but $B\not\equiv 0\mod{q^{s+1}}$. Of course since $B\equiv 0\mod{q^s}$ one has $B^2\equiv 0\mod{q^{2s}}$, and so in fact $B^2\equiv 0\mod{q^{s+1}}$. One concludes that $q$ divides $r$. Since $r$ is prime and $q>2$, this implies that $q=r$ and $q$ is an odd prime.

We have
$$I_d=(I_d+B)^q=I_d+qB+\sum_{i=2}^q\binom{q}{i}B^i,$$
so
$$-qB=\sum_{i=2}^q\binom{q}{i}B^i.$$
Denote this last equality by $(\ast)$. Since $q$ is odd, $q$ divides $\displaystyle\binom{q}{2}$, and so the right-hand side of $(\ast)$ is congruent to zero mod $q^{s+2}$. Of course $s$ was chosen so that the left-hand side does \emph{not} satisfy that congruence, and so this is impossible.

Thus in fact $r=1$ and $\G_q$ is torsionfree.

\end{proof}

In this way, one sees that there are ``many" dense torsionfree subgroups of the Chevalley group $\ger{g}(\Q_p)$, proving Proposition~\ref{prop2}. 

\begin{remark}\label{other_base_fields}
A similar method can be used for other local fields $K$. The $K=\Q_p$ case is prototypical if $K$ has characteristic 0. If $K=\mathbb{F}_p((t))$, the above arguments can be modified to produce dense subgroups $H$ of $\ger{g}(K)$ that have only $p$-torsion. Assuming $p$ is chosen to not divide $2|W|$, this will yield the desired properties of the action of $H$ on the corresponding building.
\end{remark}

\renewcommand{\baselinestretch}{1}

\begin{bibdiv}
\begin{biblist}

\bib{abramenko07:_trans_properties}{article}{
  author={Abramenko, Peter},
  author={Brown, Kenneth S.},
  title={Transitivity properties for group actions on buildings},
  journal={Journal of Group Theory},
  volume={10},
  pages={267-277},
  date={2007},
}

\bib{abramenko08:_build_theory_apps}{book}{
  author={Abramenko, Peter},
  author={Brown, Kenneth S.},
  title={Buildings: Theory and Applications},
  series={Graduate Texts in Mathematics},
  volume={248},
  publisher={Springer-Verlag},
  address={New York},
  date={2008},
  isbn={978-0-387-78834-0},
}


\bib{humphreys92:_refl_gps_and_cox_gps}{book}{
  author={Humphreys, James E.},
  title={Reflection Groups and Coxeter Groups},
  series={Cambridge Studies in Advanced Mathematics},
  publisher={Cambridge University Press},
  address={Cambridge},
  date={1992},
  isbn={978-0-521-43613-7},
}

\bib{rapinchuk}{misc}{
	author={Rapinchuk, Andrei},
	title={Private correspondence},
}

\bib{steinberg67:_chev_gps}{book}{
  author={Steinberg, Robert},
  title={Lectures on Chevalley Groups},
  publisher={Yale University Press},
  date={1967},
}  

\bib{tits92:_twin_kac_moody}{article}{
  author={Tits, Jacques},
  title={Twin buildings and groups of Kac-Moody type},
  journal={London Mathematical Society Lecture Note Series},
  volume={165},
  date={1992},
  pages={249\ndash 286},
}

\bib{weiss09:_struct_affine_build}{book}{
	author={Weiss, Richard M.},
	title={The Structure of Affine Buildings},
	series={Annals of Mathematics Studies},
	publisher={Princeton University Press},
	address={Princeton},
	date={2009},
	isbn={978-0-691-13659-2},
}

\end{biblist}
\end{bibdiv}

\end{document}